\newcommand{\AlgComment}[2]{\hspace{#1}\textit{\footnotesize \textcolor{black!70}{// #2}}}
\newcommand{\bZ}{\mathbb{Z}}
\newcommand{\bK}{\mathbb{K}}
\newcommand{\bR}{\mathbb{R}}
\newcommand{\bP}{\mathbb{P}}
\newcommand{\tgr}{\operatorname{TGr}(3,6)}  
\newcommand{\gr}{\operatorname{Gr}(3,6)} 
\newcommand{\grO}{\operatorname{Gr_0}(3,6)} 
\newcommand{\crxa}{R_A} 
\newcommand{\tb}{\operatorname{TB}}  
\newcommand{\tpr}{\overline{\pi}}  
\newcommand{\pl}{I_{pl}}
\newcommand{\rpl}{R_{pl}}
 \newtheorem{alg}[thm]{Algorithm}
\DeclareMathOperator{\init}{in}
\DeclareMathOperator{\trop}{trop}
\DeclareMathOperator{\Relint}{Relint}
\DeclareMathOperator{\Sym}{Sym}
\newcommand\restr[2]{{\left.\kern-\nulldelimiterspace #1 \right|_{#2}}}
\newcommand\supind[1]{{\smash[t]{(#1)}}}
\title{Towards classifying toric degenerations\\ of cubic surfaces}
\keywords{cubic surface, del Pezzo surface, Cox ring, toric degeneration, tropicalization}
\author{Maria Donten-Bury}
\address{%
Institute of Mathematics, University of Warsaw\\
Banacha 2, 02-097 Warszawa, Poland\\
\email{m.donten@mimuw.edu.pl}
}
\author{Paul G\"orlach}
\address{%
Max Planck Institute for Mathematics in the Sciences\\
Inselstraße 22, 04103 Leipzig\\
\email{goerlach@mis.mpg.de}
}
\author{Milena Wrobel}
\address{%
Carl von Ossietzky Universität Oldenburg\\
Institut für Mathematik, 26111 Oldenburg
\\
\email{milena.wrobel@uni-oldenburg.de}
}
\date{2019/09/14}
\begin{document}

\maketitle

\begin{abstract}
We investigate the class of degenerations of smooth cubic surfaces which are obtained
from degenerating their Cox rings to toric algebras.
More precisely, we work in the spirit
of Sturmfels and Xu who use the theory of Khovanskii bases to determine
toric degenerations of Del Pezzo surfaces of degree 4 and who
leave the question of classifying these degenerations in the degree 3 case as an open
problem. In order to carry out this classification we describe an approach
which is closely related to tropical geometry and present partial results in this direction.
\end{abstract}

\section{Introduction}
We work over the field of rational functions $K:=\mathbb{K}(t)$, where $\mathbb{K}$ is any field of characteristic zero. Our main objects are smooth cubic surfaces in $\mathbb{P}^3$ that arise as the blow up of $\bP^2$ in six points in general position. We store the coordinates of these points in a matrix $A$ and denote the corresponding cubic surface with $X_A$. 
In this article we investigate \emph{toric degenerations} of these varieties, i.e.\ flat families with general fiber $X_A$ such that the special fiber is a toric~variety. Degenerations are a powerful tool in algebraic geometry as they allow 
to deduce geometric properties of the general fiber, as for example the Hilbert function, from the special one.

Our approach to find degenerations 
is based on the article 
\cite{StXu} of Sturmfels and Xu 
who consider degenerations of the 
spectrum of the \emph{Cox ring}
$$
\mathcal{R}(X_A) = \bigoplus_{[D] \in \mathrm{Cl}(X_A)}\Gamma(X_A, \mathcal{O}_{X_A}(D)),
$$
of varieties $X_A$ arising as the blow up of $\bP^2$ 
in at most $8$ points in general position, where $A$ is as before the matrix storing the coordinates of these points.
Note that one can regain $X_A$ as a GIT-quotient of its \emph{total coordinate space}, i.e.\ the spectrum of its Cox ring.
The crucial property of the Cox rings of these varieties is that due to a result of Nagata \cite{Nagata} 
they are isomorphic to an invariant ring of a certain group action, the so called \emph{Cox-Nagata ring} $R_A$, see Section \ref{sec2} for the construction.

In particular we obtain a realization of $\mathcal{R}(X_A)$ as a subalgebra of a polynomial ring over the field $K$. This allows us to find toric degenerations of $\mathrm{Spec} \, \mathcal{R}(X_A)$ via the theory of Khovanskii bases. (Note that not all toric degenerations arise in this way.)
The notion of Khovanskii bases was developed in \cite{RobbSwee} by Robbiano and Sweedler, where they are referred to as \emph{sagbi bases}, and was generalized and made applicable in a much broader setting in the article \cite{KaMaKhova} of Kaveh and Manon.

Let us fix the setting:
For every $p \in \mathbb{K}(t)$ let $\omega_p$ be the unique integer such that
$t^{- \omega_p}p(t)$ has neither a pole nor a zero at $t=0$. Consider the valuation
$$
\nu \colon \mathbb{K}(t)^* \rightarrow \bZ,
\quad
p \mapsto \omega_p.$$
Then for each $f \in K[x_1\ldots, x_r]$ we define its
\emph{initial form} to be 
$$\mathrm{in}(f):= (t^{- \omega}f)|_{t=0} \in \bK[x_1\ldots, x_r],$$
where $\omega$ is the minimum of $\nu$ on the coefficients of all monomials of $f$. 
For a $K$-subalgebra $U \subseteq K[x_1, \ldots, x_r]$ 
we define its \emph{initial algebra} $\mathrm{in}(U)$ to be the $\mathbb{K}$-subalgebra of $\bK[x_1, \ldots, x_r]$ generated by $\mathrm{in}(f)$, where $f$ runs over all elements of $U$. 
\begin{dfn}
In the above setting we call a subset $\mathcal{F}:=\left\{f_1, \ldots, f_m\right\} \subseteq U$ \emph{moneric of weight  $(\omega_1, \ldots, \omega_m)$} 
if $\mathrm{in}(f_i) = (t^{- \omega_i}f_i)|_{t=0}$ is a monomial 
for all $i$. If furthermore
the initial algebra 
$\mathrm{in}(U)$ is generated by the set 
$\left\{\mathrm{in}(f); \ f \in \mathcal{F}\right\}$ we call $\mathcal{F}$ a \emph{Khovanskii basis} of $U$.
\end{dfn}

\noindent Having a finite Khovanskii basis on hand 
one obtains a degeneration of
$\mathrm{Spec}(U)$ to the toric variety $\mathrm{Spec}(\mathrm{in}(U))$, 
see e.g.\ \cite[Thm.3.3]{BeCoDBFM}.
Note that the set of degenerations arising this way 
strictly contains the set of \emph{Gr\"obner degenerations} to a prime ideal, see \cite[Thm. 1]{KaMaKhova} and \cite[Sect. 1]{CoHeVa}.
See \cite[Ex. 3.3]{StXu} 
for an example of a degeneration arising 
via the above construction but which cannot be obtained as a Gr\"obner degeneration.

As by construction, the Cox ring and therefore the Cox-Nagata ring contains information about all possible embeddings of $X_A$ 
into projective space and $X_A$ can be regained as a GIT-quotient out of its total coordinate space 
one can deduce degenerations of $X_A$ from that of its Cox-Nagata ring.
More precisely, in \cite{BeCoDBFM}
Bernal et al. show that 
starting with a suitably high multiple of a very ample line bundle of $X_A$ the toric degeneration of the Cox-Nagata ring provides a toric degeneration of $X_A$ such that the special fiber and $X_A$ are 
embedded into the same projective space $\bP^N$.

The original motivation of Sturmfels and Xu
to study this type of degenerations was the following:
Any toric degeneration of the total coordinate space of $X_A$ 
yields an
Erhart-type formula for the Hilbert function of the Cox ring $\mathcal{R}(X_A)$. 
This means that it can be read off in purely combinatorial terms by counting lattice points in slices
of the rational polyhedral cone describing the special fiber.
Moreover, fixing an embedding of $X_A$ into projective space the formula for the Hilbert function of the Cox ring induces a realization of the Hilbert function of $X$ with respect to the given embedding as the Ehrhart function of an explicit rational convex polytope, see \cite[Chap. 4]{BeCoDBFM}.

As the Hilbert functions of the Cox rings 
$\mathcal{R}(X_A)$ for generically chosen points 
coincide, the following natural question arises: 

\vspace{3mm}
\noindent
\textbf{Question:}
Which choice of points on
$\bP^2$
is the optimal one
for computing an Ehrhart-type formula for the Hilbert function
of the Cox-Nagata ring $R_A$, i.e.\
which of the corresponding cones 
of the special fiber
is defined by the smallest number of linear 
inequalities? 
\vspace{3mm}

Sturmfels and Xu answer this question
in the case of total coordinate spaces of blow ups of 
$\bP^2$ in five points and leave
the question in 
the cubic surface case as an open problem,
which is the starting point of this article.
It is worth noting that toric degenerations and related Ehrhart-type formulas may be used to investigate certain properties of the Cox-Nagata ring. For example, in \cite{StXu}, they are the key ingredient in the proof of the Batyrev--Popov conjecture that Cox rings of del Pezzo surfaces are presented by ideals of quadrics.

We now describe the organization of this article
and summarize the main results:
In Section \ref{sec2} we give the necessary background on Cox-Nagata rings and explain 
the previous results of \cite{BeCoDBFM} and \cite{StXu} on this topic.
Moreover we describe an ad hoc approach to answer the above question. As it turns out that this approach is computationally too expensive to 
finish in a reasonable time
we describe in Section \ref{SecIdea} an alternative approach
for finding a computable fan parametrizing moneric and Khovanskii subspaces.
The idea is to split the problem into two steps.
The first step gives rise to a partial classification of the possible degenerations and is treated in Section \ref{secResults}. 
Here we obtain $78$ moneric classes
$38$ of which are Khovanskii; see Theorems \ref{thm:SubdivTGr} and \ref{thmKhovanskii}
for the precise result.
The second step is more elaborate
and
ends up in a serious case by case study.
We present the main ideas and state a complete result
for one of seven possible cases in Section \ref{sec:EEEE} Theorem \ref{thm:mainthm5}. 
In order to obtain these results we make extensive use
of the computer algebra system \texttt{Singular}~\cite{Singular} and, for the results of Section~\ref{secKhovanskii}, \texttt{Macaulay2}~\cite{M2}. 
The code, extensive classification output and supplementary material is made available at \url{https://software.mis.mpg.de}.

\section{Earlier results}\label{sec2}
In this section we give the general construction of the Cox-Nagata ring which represents 
the Cox ring of a projective space $\bP^r$ blown up in $n$ points in general position. We recall how this ring was used in \cite{StXu} to compute toric degenerations of smooth del Pezzo surfaces and to give an Erhart-type formula for the Hilbert polynomial.
Moreover, we introduce the tropical Grassmannian and recall the results of \cite{BeCoDBFM}, where it is shown that this natural candidate is insufficient for our task.

Let us recall how to compute the Cox ring of projective $r$-space blown up in $n$ points in general position:
Let $A:= (a_1, \ldots, a_n)$ be an $(r + 1) \times n$ matrix of full rank over $K$ with pairwise linearly independent columns and denote by $X_A$ the blow up of $\bP^{r}$ in the corresponding points. 
Denote with $D_1, \ldots, D_n \subseteq X$ the exceptional divisors and let $D_0 \subseteq X$ be the proper transform of a hyperplane. 
Then the subgroup $\mathcal{D} \subseteq \mathrm{Div}(X_A)$ generated by the $D_i$ projects isomorphically onto the free abelian group $\mathrm{Cl}(X_A)$ and the Cox ring of $X_A$ is given as 
$$
\mathcal{R}(X_A) = \bigoplus_{D \in \mathcal{D}} \Gamma(X_A, \mathcal{O}_{X_A}(D)).
$$
As shown by Nagata in \cite{Nagata} this Cox ring has an especially nice interpretation in terms of the invariant ring of the following group action: Let
$G \subseteq K^{n}$ be the nullspace of
$A$. Then $G$ is an (additive) unipotent group and
setting 
$\lambda \cdot z := (\lambda_1z_1, . . . , \lambda_nz_n),$
for a point $z \in K^n$
we obtain an action
$$
G \times (K^n \times K^n) \rightarrow (K^n \times K^n)
\quad (\lambda,(z, w)) \mapsto (z, w + \lambda \cdot z).$$
We set $R:= K[x_1, \ldots, x_n, y_1, \ldots, y_n]$
for the polynomial ring whose variables $x_i$ resp.\ $y_j$ correspond to the coordinates $z_i$ resp.\ $w_j$. Then the ring of invariants $R_A:=R^G$ 
is called the \emph{Cox-Nagata ring}
and the following statement holds:

\begin{thm}\label{thm:CoxNagata}
Assume $r \geq 2$ holds. Then the Cox ring $\mathcal{R}(X_A)$ is isomorphic to the Cox-Nagata ring $R_A$.
\end{thm}

In \cite{Mukai} Mukai explicitely describes the isomorphism of Theorem \ref{thm:CoxNagata}. Moreover, in \cite{CaTe}  Castravet and Tevelev give an 
explicit set of generators of the Cox ring of a projective space $\bP^r$ blown up in at least $r+3$ points lying on a rational normal curve of degree $r$. 
This generalizes a result of Batyrev and Popov \cite{BaPo} who show that the Cox ring of a smooth del Pezzo surface of degree at least $2$ arising as a blown up $\bP^2$ is generated by global sections corresponding to the exceptional curves. 
Using this, we obtain a distinguished set of generators of $R_A$, which is up to scalar multiples determined by
the linear subspace $G \subseteq K^n$. 
In the following we will always assume $R_A$ to be represented by this set of generators $\mathcal{F}$ and 
call the defining linear subspace $G \subseteq K^n$ \emph{moneric (Khovanskii) subspace} if the corresponding set of generators is  a moneric (a Khovanskii basis). 
This enables us to reformulate our aim as follows, see Problem 5.4 in \cite{StXu}:

\vspace{3mm}
\noindent
\textbf{Aim:}
Determine all equivalence classes of $3$-dimensional Khovanskii subspaces of~$K^6$,
where two subspaces $G$ and $G'$ are called equivalent if the corresponding initial algebras $\mathrm{in}(R^G)$ and $\mathrm{in}(R^{G'})$ coincide.
\vspace{3mm}

Let us explicitly look at the generators of the Cox-Nagata ring of
the smooth cubic del Pezzo surfaces. Let $G$ and $A$ be as before with $n=6$ and $r = 3$ and assume that the points in $\bP^2$ corresponding to the columns of $A$ are in general position, i.e.\ no three of them lie on a line and no six of them on a conic. 
Denote by $p_{ijk}$ the Plücker coordinates of $G$.
Then the set of generators $\mathcal{F}$ of $R_A \subseteq K[x_1, \ldots, x_6, y_1, \ldots, y_6]$ consists of the following elements:
\begin{itemize}
    \item 6 generators corresponding to the exceptional divisors: $E_i = x_i$.
    \vspace{-3mm}
    \item 15 generators 
    corresponding to the lines through pairs of points, e.g.\ we have
    \begin{equation} \label{eq:genF}
    F_{12} \ = \ 
    p_{123}y_3x_4x_5x_6 + 
    p_{124}x_3y_4x_5x_6 +
    p_{125}x_3x_4y_5x_6 +
    p_{126}x_3x_4x_5y_6.
    \end{equation}
    \vspace{-6mm}
    \item 6 generators $G_1, \ldots, G_6$ corresponding to the quadrics through any five of the points, e.g.\ we have
    \begingroup
    \begin{align}
        G_1 \ = \ 
        &p_{234}p_{235}p_{236}p_{456}\cdot y_2y_3x_4x_5x_6x_1^2 
        + 
        p_{234}p_{246}p_{245}p_{356}\cdot y_2y_4x_3x_5x_6x_1^2 \nonumber
        \\
        + 
        &p_{235}p_{245}p_{256}p_{346}\cdot y_2y_5x_3x_4x_6x_1^2 
        +
        p_{236}p_{246}p_{256}p_{345}\cdot y_2y_6x_3x_4x_5x_1^2 \nonumber
        \\
        + 
        &p_{234}p_{345}p_{346}p_{256}\cdot y_3y_4x_2x_5x_6x_1^2 
        +
        p_{235}p_{345}p_{356}p_{246}\cdot y_3y_5x_2x_4x_6x_1^2 \nonumber
        \\
        +
        &p_{236}p_{346}p_{356}p_{245}\cdot y_3y_6x_2x_4x_5x_1^2 
        +
        p_{245}p_{345}p_{456}p_{236}\cdot y_4y_5x_2x_3x_6x_1^2 \nonumber
        \\
        +
        &p_{246}p_{346}p_{456}p_{235}\cdot y_4y_6x_2x_3x_5x_1^2 
        +
        p_{256}p_{356}p_{456}p_{234}\cdot y_5y_6x_2x_3x_4x_1^2 \nonumber
        \\
        +
        &(p_{235}p_{346}p_{124}p_{256} - p_{234}p_{356}p_{125}p_{246})\cdot y_2y_1x_3x_4x_5x_6x_1 \nonumber
        \\
        +
        &(p_{235}p_{246}p_{134}p_{356} - p_{234}p_{256}p_{135}p_{346})\cdot y_3y_1x_2x_4x_5x_6x_1 \nonumber
        \\
        +
        &(p_{245}p_{236}p_{134}p_{456} + p_{234}p_{256}p_{145}p_{346})\cdot y_4y_1x_2x_3x_5x_6x_1 \nonumber
        \\
        +
        &(p_{235}p_{246}p_{145}p_{356} - p_{245}p_{236}p_{135}p_{456})\cdot y_5y_1x_2x_3x_4x_6x_1 \nonumber
        \\
        +
        &(p_{236}p_{245}p_{146}p_{356} - p_{246}p_{235}p_{136}p_{456})\cdot y_6y_1x_2x_3x_4x_5x_1 \nonumber
        \\
        +
        &(p_{235}p_{246}p_{134}p_{156} - p_{234}p_{256}p_{135}p_{146})\cdot y_1^2x_2x_3x_4x_5x_6. \label{eq:genG}
    \end{align}
    \endgroup
    \vspace{-3mm}
    \noindent
\end{itemize}
Note that all the other generators can be obtained out of the ones given above by permuting the indices. 

To compute the initial forms of this set of generators we use the following observation regarding the isomorphism given in \cite[Sect. 2.3]{BeCoDBFM}: Fixing a linear subspace 
$G \in \mathrm{Gr}(n-r,n)$ the coefficients of the monomials in the generators of $R_A$ are
always given as polynomials in the Plücker coordinates of $G$. 
Moreover for generic $G$ none of these coefficients
equals zero.
Now set $N:= \binom{n}{n-r}$, let $p_1, \ldots, p_N$ denote the Plücker coordinates ordered lexicographically and let
$I_{pl}$ be the ideal generated by the Plücker relations. 
Then we may regard the generators of $R_A$ as 
elements of the ring $S:=\bK[x_1, \ldots, x_n,y_1, \ldots, y_n, p_1, \ldots, p_N] / I_{pl}$,
evaluated at the Plücker coordinates of $A$.
If it is clear from the context we will use  in the following the term "generator" 
as well for the elements in $S$ 
as for their evaluation at a point.

Recall that the value of a product is the sum of the values of its factors. In particular, given the distinguished set of generators of $R_A$, the values of the Plücker coordinates together with the values of all coefficients of the
generators of $R_A$ that are not monomials in Plücker coordinates uniquely determine the initial forms of the generating set.

To determine the initial forms of the generators $\mathcal{F}$ of 
$R_A$ we need to know all values of 
the occurring coefficients. Recall that the value of a product is the sum of the values of its factors. In particular, values of coefficients which are monomials in
the Pl\"ucker coordinates $p_i$ are already determined by the values of the $p_i$ themselves. 
In contrast, the values of the coefficients that are not monomials in Plücker coordinates are not uniquely determined by the values of the $p_i$. 
Hence we need to treat them separately and will denote them with $c_i$ for short.
In the case that $R_A$ is the Cox-Nagata ring of a cubic surface, the $c_i$ are precisely the binomial coefficients occurring in the generators $G_i$.

We deduce the following ad hoc approach for finding 
moneric or Khovanskii subspaces for Cox-Nagata rings 
$R_A$. 
Consider the map
\begin{equation}\label{equ1}
  G \mapsto (\nu(p_1), \ldots, \nu(p_N), \nu(c_1), \ldots, \nu(c_s)),  
\end{equation}
and proceed as follows:
\begin{itemize}
    \item[(i)] Determine the image of the above map.
    \vspace{-3mm}
    \item[(ii)] Subdivide the image into sets of points that give rise to the same initial forms of the generators.
    \vspace{-3mm}
    \item[(iii)] Identify those sets of points that arise from moneric or Khovanskii subspaces $G \subseteq K^n$.
\end{itemize}

We will see that the image of the above map 
consists of integer points of a certain tropical variety, a reembedding of the so-called tropical Grassmannian.
Let us recall the necessary notions on tropical varieties:
Let $k$ be a field with valuation $\mathrm{val}\colon k^* \rightarrow \mathrm{\bR}$
and let $X$ be a closed subvariety of the algebraic torus $\mathbb{T}^N(k)$. Then the \emph{Bieri-Groves set} $\mathcal{A}(X)$ of $X$ is 
$$
\mathcal{A}(X) := \left\{(\mathrm{val}(x_1),\ldots,\mathrm{val}(x_N)) \in \bR^N; \  (x_1,\ldots,x_N) \in X\right\}.$$
Let $\mathbb{L}$ be any algebraically closed field extension of $k$ with a nontrivial valuation extending the valuation on $k$. Then 
due to the Fundamental Theorem of Tropical Geometry
and \cite[Thm. 3.2.3]{MacLStur}
the closure of $\mathcal{A}(X_\mathbb{L})$ in $\bR^N$
equals the \emph{tropicalization} or \emph{tropical variety} $\mathrm{trop}(X)$,
where $X_\mathbb{L}$ denotes the extension of $X$ to a
closed subvariety of $\mathbb{T}^N(\mathbb{L})$.

We now specialize to the case of the tropical Grassmannian which will play an important role in our further considerations. 
Let $k = K$, embed $\mathrm{Gr}(d,n)$ 
with respect to the Plücker embedding into
$\bP^{N-1}$
and let $\mathrm{Gr}_0(d,n)$
be the intersection of the affine cone of 
$\mathrm{Gr}(d,n)$ with the canonical torus 
of $K^{N}$.
Then the \emph{tropical Grassmannian} is the tropicalization $\mathrm{trop}(\mathrm{Gr}_0(d,n))$ which we will refer to as
$\mathrm{TGr}(d,n)$ for short. 
In the following we will assume the tropical Grassmannian $\mathrm{TGr}(d,n)$ 
to be endowed with the fan structure given 
in \cite{SpeStu}, compare Section~\ref{secResults}.

For our approach we will use the following slight modification of the tropical Grassmannian:
Consider the embedding of 
$\mathrm{Gr}(d,n)$
into projective space
given by the map
\begin{equation}\label{equ2}
G \mapsto [p_1, \ldots, p_N, c_1, \ldots, c_s] \in \bP^{N+s-1}.
\end{equation}
Then, intersecting the affine cone over 
$\mathrm{Gr}(d,n)$ with respect to this embedding
with the canonical torus
of $K^{N+s}$ we can form its tropicalization, which we will refer to as $T(c_1, \ldots, c_s)$
in the following.
The image of the generic subspaces 
$G \in \mathrm{Gr}(d,n)$ under the map 
(\ref{equ1})
as above turns out to be the Bieri-Groves set
of this very affine variety.
Even more, the subdivision of the 
image into sets of points that give rise
to the same initial forms 
can be obtained by endowing $T(c_1, \ldots, c_s)$ 
with a sufficiently fine fan structure
and looking at the set of integer points in the relative interior of its maximal cones.
Here and in the following a \emph{cone} is always 
a convex polyhedral cone and a \emph{fan} a polyhedral fan,
where we allow the cones to be non-pointed.
Note that in case that there are no 
non-monomial coefficients 
$c_1, \ldots, c_s$ the above map is the usual Plücker embedding and the tropicalization $T(c_1, \ldots, c_s)$
is the tropical Grassmannian.

The latter case shows up when considering smooth del Pezzo surfaces $X_A$ that arise as blow ups of 
$\bP^2$ in five points in general position. 
It was treated in \cite[Sect. 4]{StXu},
where Sturmfels and Xu show that the collection of all possible initial forms for the generating set of $R_A$ gives rise to a subdivision of the tropical Grassmannian $\mathrm{TGr}(2,5)$ into $2400$ maximal cones. $600$ of these cones come from moneric subspaces and all but $60$ of these are even Khovanskii.
Modulo permuations of the indices of the 
Plücker coordinates, the number of moneric classes 
turns out to be seven and the number of Khovanskii
classes is six.
Now calculating the corresponding cones of the toric degenerations they show that the optimal number 
of linear inequalities for computing the Hilbert function is $12$. 
Moreover, they conjecture that 
in the case of del Pezzo surfaces of degree $3$ 
the optimal number is $21$. 
Classifying all equivalence classes of $3$-dimensional 
Khovanskii subspaces of $K^6$ would enable us to
prove this conjecture.

The first step
of our agenda turns out
to be the task of computing the integer points of a tropical variety in $\bR^{56}$: The embedding of $\mathrm{Gr}(3,6)$ 
as described in (\ref{equ2}) is given by the Plücker coordinates 
and the coefficients of the monomials of the generators $E_i, F_{ij}$ and $G_i$ of $R_A$
that are not monomials in Plücker coordinates. The latter ones are
exactly the $36$ binomial coefficients in the generators $G_i$. 
Unfortunately, our experiments suggest that this approach is computationally infeasible, which leads to the task of finding a computable fan parameterizing moneric and Khovanskii subspaces. 
In \cite{BeCoDBFM} the tropical Grassmannian and the so called Naruki fan are investigated in order to attack this problem 
but turn out to be insufficient for this task:
In their Example 5.3 they give two threedimensional linear subspaces of $K^6$
having the same values in their Plücker coordinates. But whereas the one 
is moneric the other one has a binomial as initial form of one of its generators. 
Similarly they proceed for the Naruki fan.

\section{Idea behind the classification}\label{SecIdea}

From now on we work on the case of smooth del Pezzo surfaces of degree~3.
While in the previous section we claim that $\tgr$ is not suitable for a direct use in the classification of Khovanskii bases of~$\crxa$, here we argue that it still contains a lot of information on these bases and can play a very important role in the solution of this problem.

We compare the structure of $\tgr$ with the structure of the tropical variety $T(c_1, \ldots, c_{36}) \subseteq \bR^{56}$
arising via the embedding of $\gr$ in $\bP^{55}$ as in (\ref{equ2}), 
where the $c_i$ are the binomials appearing in the generators of $\crxa$. 
We set $\tb:= T(c_1, \ldots, c_{36})$ for short and consider the map
\begin{equation}\label{themap}
\tpr \colon \tb \longrightarrow \tgr
\end{equation} 
which comes from a projection to Pl\"ucker coordinates.
Since this projection restricts to a monomial map of big tori, by~\cite[Cor.~3.2.13]{MacLStur} we know that $\tpr$ is well-defined and surjective. Moreover, note that the inverse image of any cone of $\tgr$ under $\tpr$ is a fan, as it is an intersection of a rational polyhedral cone with a fan structure on $\tb$.

The idea behind using $\tpr$ is that since $\tgr$ does not have enough data and $\tb$ does, but is currently uncomputable, we investigate both at the same time. We try to obtain as much of the classification as possible from $\tgr$, but look for help at $\tpr^{-1}(C)$ for all subcones $C$ of $\tgr$ which do not contain the full information on Khovanskii bases of $\crxa$ corresponding to its points. Moreover, we hope that understanding the structure of $\tpr$ may possibly give us some insight into the cardinality of its fibers over such subcones $C$, even without determining $\tpr^{-1}(C)$ explicitly.

\subsection{Plan for the classification}

The reason for the fact that $\tgr$ is insufficient for parametrizing all Khovanskii bases of $\crxa$ is the possibility of a \emph{cancellation} in a binomial. If the value of $\nu$ is the same for both monomials in a binomial $b = m_1 + m_2$, their leading terms may cancel and $\nu(b)$ may be bigger than $\nu(m_1) = \nu(m_2)$.  In particular, $\nu(b)$ cannot be determined by the values $\nu(m_1), \nu(m_2)$ (in which case we could treat $b$ as a monomial for the purpose of determining Khovanskii bases). On the level of Bieri-Groves sets, this is the only possible cause for the presence of non-trivial fibers of $\tpr$ -- either positive dimensional or consisting of several isolated points.

As shown by~\cite[Ex.~5.3]{BeCoDBFM}, $\tgr$ contains a locus where some binomial coefficients of generators $G_1,\ldots, G_6$ may cancel and $\tpr$ actually has a non-trivial fiber. A condition for a binomial having equal values of monomials is a linear form in $\nu(p_1), \ldots, \nu(p_N)$. Hence the locus $L_{BC}$ where a cancellation of at least one binomial coefficient is possible can be easily determined as an intersection of the fan $\tgr$ with a union of finitely many hyperplanes. This construction shows also that $L_{BC}$ has a fan structure. Our situation can be summarized as follows.

\begin{rem}
Over $\tgr \setminus L_{BC}$ the map $\tpr$ is one-to-one. That is, outside $L_{BC}$ the leading terms of elements of~$\mathcal F$ are uniquely determined by $\nu(p_1),\ldots, \nu(p_N)$ and thus are parametrized by integral points of $\tgr$. Over $L_{BC}$ the fibers of $\tpr$ may, but do not have to, be non-trivial.
\end{rem}

Hence we intend to classify Khovanskii bases of $\crxa$ for $G$ corresponding to points of $\tgr \setminus L_{BC}$ using $\tgr$, and investigate fibers of $\tpr$ to find Khovanskii bases for $A$ coming from points of $L_{BC}$. We proceed along the following lines.

\begin{alg}\label{algorithm}
Start with a cone $C$ in $\tgr$, not necessarily maximal. The output is the set of all Khovanskii bases corresponding to points of $\tpr^{-1}(C) \subset \tb$.
\begin{enumerate}
\item \textbf{Using generators corresponding to lines.} Subdivide~$C$ according to the generators $F_{ij}$ of $\crxa$. Cones $C_1,\ldots, C_n$  in the subdivision are in one-to-one correspondence with all possible (moneric) choices of leading terms of all of the generators $F_{ij}$.

Our computations in the proof of Theorem~\ref{thm:SubdivTGr} show that such a choice contains already a lot of information on the relation between values of $\nu$ for the Pl\"ucker coordinates. That is, for each $C_i$ there is not much freedom in choosing leading terms of $G_1,\ldots, G_6$.

\item \textbf{Using generators corresponding to conics.} Subdivide each of the $C_i$ according to choices of leading monomials of $G_1,\ldots, G_6$. While investigating possible values of binomial coefficients one should be able also to describe $L_{BC} \cap C_i$.

\item \textbf{Investigating $\tpr$ over $L_{BC}$.} Take all subcones for which it is not clear whether there is a single choice of leading monomials of $G_1,\ldots,G_6$, and investigate the fibers of $\tpr$ over them. If for a fixed subcone there are only a few binomials that can cancel, one may try to compute the tropicalization of an embedding of $\tgr$ via Pl\"ucker relations and these binomials, and use the result to understand fibers of $\overline{\pi}$. 

We test this approach in Section~\ref{secResults}, where we compute the tropicalization of the embedding of $\gr$ in $\bR^{21}$ given by Pl\"ucker coordinates and one binomial $c$ to prove that over one cone in the subdivision fibers of $\tpr$ are trivial despite the fact that $c$ may cancel.

\item \textbf{Locating Khovanskii bases.} Decide which moneric bases found by the algorithm are 
Khovanskii bases. We deal with this question in Section~\ref{secKhovanskii} using the idea explained in~\cite[Sect.~4,5]{StXu}.

\end{enumerate}
\end{alg}

This general scheme will be modified and extended in order to reduce the complexity of the necessary computations. For the first modifications see Section~\ref{sect:reducing}.

\begin{rem} \label{rem:nonMaxCones}
Note that we have to consider also non-maximal cones of $\tgr$, because they can lie in the image of a maximal cone of~$\tb$. However, most of the faces of cones obtained in the process of subdividing do not have to be analyzed. A face between two cones in the subdivision related to a generator $f$ of~$\crxa$ is, by definition, the locus between regions corresponding to different choices of leading monomials of~$f$. Thus, it consists of points for which two monomials of $f$ have coefficients with the same value, i.e.\ $f$ is not moneric. Only if such a face lies in $L_{BC}$, it has to be taken into consideration.
\end{rem}


\subsection{Reducing the complexity}\label{sect:reducing}

Here we present two important observations which simplify the realization of Algorithm~\ref{algorithm}: reduce the computational complexity and give a better description of~$L_{BC}$. 

First, we consider the action of $\mathfrak S_6$ on $\gr$ coming from permuting columns of the matrix~$A$, or coordinates in the subspace $G$.  Note that a del Pezzo surface $X_A$ is not affected by this action, because permuting columns of $A$ is just permuting the set of points in~$\bP^2$ which are blown up.
This action can be realized as signed permutations of the Pl\"ucker coordinates $\{p_{ijk} \colon 1 \leq i < j < k \leq 6\}$ coming from permutations of the 6 indices, which preserve Pl\"ucker relations.
Thus it extends from the Pl\"ucker embedding of the affine cone over $\gr$ to the linear automorphism of its ambient affine space. This action, in turn, induces an action on $\tgr$ (also on the fan structure), and the tropicalization map is $\mathfrak S_6$-equivariant.

\begin{lemma}\label{lemma:symmetries}
If $P_1, P_2\in \tgr$ are in the same orbit of the $\mathfrak S_6$ action then the integer points in the fibers $\tpr^{-1}(P_1)$ and $\tpr^{-1}(P_2)$ correspond to the same choices of leading terms of generators of~$\crxa$ up to a permutation of variables. 
\end{lemma}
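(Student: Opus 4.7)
The plan is to exploit the $\mathfrak S_6$-equivariance of $\tpr$ together with the fact that the $\mathfrak S_6$-action on both $\tgr$ and $\tb$ arises from a signed coordinate permutation of an ambient linear space, hence preserves the integer lattice. Fix $\sigma \in \mathfrak S_6$ with $\sigma(P_1) = P_2$.

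First, I would carefully track how $\sigma$ acts on the data defining $\crxa$. On $R = K[x_1,\ldots,x_6,y_1,\ldots,y_6]$ it sends $x_i \mapsto x_{\sigma(i)}$ and $y_i \mapsto y_{\sigma(i)}$; on Pl\"ucker coordinates it acts by the signed permutation $p_{ijk} \mapsto \pm p_{\sigma(i)\sigma(j)\sigma(k)}$. Reading off (\ref{eq:genF}) and (\ref{eq:genG}), one checks that $\sigma$ permutes the distinguished generators: it sends $E_i$ to $E_{\sigma(i)}$, $F_{ij}$ to a unit multiple of $F_{\sigma(i)\sigma(j)}$, and $G_i$ to a unit multiple of $G_{\sigma(i)}$. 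In particular, the $36$ non-monomial coefficients $c_1,\ldots,c_{36}$ (the binomials occurring in the $G_i$) are permuted amongst themselves up to sign, because the combinatorial rule producing them is symmetric in the six indices.

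Next, I would assemble these actions into a signed permutation of the homogeneous coordinates $[p_1,\ldots,p_N,c_1,\ldots,c_s]$ of $\bP^{N+s-1}$, which descends to a coordinate permutation of $\bR^{N+s}$ after tropicalization (signs are invisible to $\nu$). This permutation is an automorphism of the fan $\tb$, preserves the integer lattice, and fits into a commutative square with $\tpr$. Therefore the map $Q \mapsto \sigma \cdot Q$ restricts to a bijection between the integer points of $\tpr^{-1}(P_1)$ and those of $\tpr^{-1}(P_2)$. Finally, to unpack the consequence for leading terms, note that for an integer point $Q \in \tpr^{-1}(P_1)$ the initial form of each generator $H$ of $\crxa$ is determined by the valuations recorded by $Q$. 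Applying $\sigma$ simultaneously to the variables and to the coordinates of $Q$ turns $\init(H)$ at $Q$ into $\init(\sigma(H))$ at $\sigma(Q)$, i.e.\ into the image of $\init(H)$ under the substitution $x_i \mapsto x_{\sigma(i)}$, $y_i \mapsto y_{\sigma(i)}$. Since $\sigma$ permutes $\{E_i, F_{ij}, G_i\}$ up to units, the collection of leading monomials at $\sigma(Q)$ is exactly the collection at $Q$ relabeled by $\sigma$, which is the content of the lemma.

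The main obstacle I foresee is purely bookkeeping: verifying rigorously from the explicit formulas (\ref{eq:genF}) and (\ref{eq:genG}) that $\sigma$ really permutes the set $\{c_1,\ldots,c_{36}\}$ up to sign, rather than producing some more complicated linear combination of Pl\"ucker binomials; symmetry of the construction makes this plausible but one has to check the sign pattern case by case. Once this combinatorial identification is in place, the rest of the proof is a formal consequence of equivariance and the integrality of the induced coordinate permutation.
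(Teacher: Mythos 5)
Your proposal follows essentially the same route as the paper's proof: establish that the distinguished generating set $\mathcal F$ is $\mathfrak S_6$-invariant up to signs (and Pl\"ucker relations), deduce a well-defined $\mathfrak S_6$-action on $\tb$ that makes $\tpr$ equivariant, and conclude by equivariance. You spell out the bookkeeping more explicitly than the paper — tracking the induced action on $x_i$, $y_i$, $p_{ijk}$, and the $c_i$, checking that the lattice is preserved, and unpacking why equivariance transfers leading-monomial data — but the skeleton is identical. One small caveat worth making explicit in your argument: as the paper notes, the image $\sigma(c_i)$ need not equal $\pm c_{\tau(i)}$ literally in $\bK[p_{ijk}]$, only modulo the Pl\"ucker ideal (see also Lemma~\ref{lem:equivbin}); this suffices because the map $G \mapsto [p_\bullet(G), c_\bullet(G)]$ and hence $\tb$ depend only on the $c_i$ as functions on $\gr$, not on the chosen polynomial representatives. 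With that clarification, your final worry about more complicated Pl\"ucker combinations is harmless — they are exactly what "up to Pl\"ucker relations" absorbs.
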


\begin{proof}
The generating set $\mathcal{F} \subset  \mathbb{K}[p_{ijk} \colon 1\leq i < j < k \leq 6][x_1,\ldots,x_6,y_1,\ldots, y_6]$ is invariant (up to signs and Pl\"ucker relations) under the $\mathfrak S_6$-action. (This can be checked either by direct computation or, more geometrically, by referring to the Cox ring structure.)
We conclude that $\mathfrak S_6$ acts also on the set of binomial coefficients of generators $G_1,\ldots, G_6$ up to signs. This allows us to define the $\mathfrak S_6$-action on $\tb$. Note that $\tpr$ becomes an equivariant map, which ends the proof.
\end{proof}

\begin{cor}
We may restrict to applying Algorithm~\ref{algorithm} to representatives of $\mathfrak S_6$-orbits in~$\tgr$.
\end{cor}

Using Lemma~\ref{lemma:symmetries} reduces the number of cones to consider significantly. For instance, instead of processing all 1005 maximal dimensional cones of $\tgr$ we have to deal with only~7 representatives of equivalence classes, see Section~\ref{secResults}.

The second observation concerns \emph{Pl\"ucker-equivalent} polynomials: we say that $f_1, f_2 \in \mathbb{K}[p_{ijk} \colon 1\leq i < j < k \leq 6]$ are Pl\"ucker-equivalent if their difference belongs to the Pl\"ucker ideal. 
Note that if $f_1, f_2$ are Pl\"ucker-equivalent  then we have $\nu(f_1) = \nu(f_2)$ on $\gr$, because by substituting all $p_{ijk}(G) \in \mathbb{K}(t)$ for any $G \in \gr$ into $f_1-f_2$ we get~0. This proves the following

\begin{lemma}\label{lem:equivbin}
Instead of considering a single binomial we may look at a whole class of Pl\"ucker-equivalent binomials (or even polynomials with more terms). If for some $G \in \gr$ one of them may cancel, i.e.\ values of both monomials are the same, we look for an equivalent one which cannot cancel at $G$. If we find such a binomial, we can read out its value only from values of Pl\"ucker coordinates, without passing to $\mathbb{K}(t)$.
\end{lemma}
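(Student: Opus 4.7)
The plan is essentially to unpack the one-line justification already given in the paragraph before the lemma, so the proof will be short. First I would fix notation: any polynomial $f\in\bK[p_{ijk}: 1\leq i<j<k\leq 6]$ is evaluated at a subspace $G\in\gr$ by substituting the Pl\"ucker coordinates $p_{ijk}(G)\in K$, and by definition of $\gr$ every element of the Pl\"ucker ideal $\pl$ vanishes under this substitution. Hence two Pl\"ucker-equivalent polynomials $f_1,f_2$ give the same element of $K$ upon evaluation at $G$, so $\nu(f_1(G))=\nu(f_2(G))$ for every $G\in\gr$ (this is exactly the observation asserted before the lemma, which I would state as the starting point).

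For the substantive content of the lemma, suppose $b$ is a binomial in the $p_{ijk}$ whose two monomials $m_1,m_2$ satisfy $\nu(m_1(G))=\nu(m_2(G))$ at some fixed $G\in\gr$. Then $\nu(b(G))$ may strictly exceed this common value because of cancellation of leading terms, and therefore cannot be recovered from the tuple $(\nu(p_{ijk}(G)))_{ijk}$ alone. The strategy is to seek a Pl\"ucker-equivalent representative $b' = b + g$ with $g\in\pl$, possibly with more than two monomial summands $m'_1,\dots,m'_k$, such that the values $\nu(m'_j(G))$ have a \emph{unique} minimizer. For such a $b'$ one has $\nu(b'(G))=\min_j \nu(m'_j(G))$, which is computed purely from the values $\nu(p_{ijk}(G))$ and the exponent vectors appearing in $b'$. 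Combining with the first step yields $\nu(b(G))=\nu(b'(G))$, which is the conclusion of the lemma.

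The only real subtlety, and therefore the ``main obstacle,'' is keeping straight the two roles of the $p_{ijk}$: as formal indeterminates in the coordinate ring modulo $\pl$ on the one hand, and as specialisations $p_{ijk}(G)\in K$ at a concrete $G\in\gr$ on the other. Once this distinction is made explicit, the required vanishing of any element of $\pl$ at $G\in\gr$ is immediate from the definition of the Grassmannian, and the rest is an elementary application of the valuation $\nu$ on $K$. Notice that the lemma does not claim existence of a suitable $b'$; that is left as a hypothesis (``if we find such a binomial''), and hence no case analysis of the generators $F_{ij}, G_i$ of $\crxa$ or of the binomial coefficients that can cancel is required in the proof itself.
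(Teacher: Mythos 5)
Your proposal is correct and follows the same route as the paper: the paper's entire justification is the observation preceding the lemma that Pl\"ucker-equivalent polynomials evaluate to the same element of $K$ at any $G \in \gr$ (since the Pl\"ucker ideal vanishes there), hence have the same value under $\nu$, and the rest is exactly the elementary valuation argument you spell out. You have merely unpacked the one-line argument into explicit steps, correctly noting that the lemma is conditional and does not assert existence of a non-cancelling representative.
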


The computations done so far (see Section~\ref{secResults}) show that thanks to Lemma~\ref{lem:equivbin} we get significantly smaller $L_{BC}$ and therefore less lower-dimensional cones to process. For every binomial coefficient of $G_i$ one can find several other binomials equivalent to it up to Pl\"ucker relations. It happens very often that even if one binomial in a group of equivalent ones cancels there is one which does not.

\section{Classification of moneric subspaces from \texorpdfstring{$\tgr \setminus L_{BC}$}{TGr(3,6) minus L\_BC}}\label{secResults}

As explained in the previous section, the map $\tpr \colon \tb \to \tgr$ is bijective outside the locus $L_{BC} \subseteq \tgr$ given by possible cancellations of lowest order terms in some binomial expressions. The locus where $\tpr$ is not one-to-one induces subdivisions of the cones in $\tgr$ and the resulting fan structure can in turn can be refined further by choosing initial terms for each of the generators in $\mathcal F$. This refined fan structure on $\tgr$ has the property that each maximal cone gives rise to a class of moneric subspaces, while the non-maximal cones contained in $L_{BC}$ require further investigation, see Remark~\ref{rem:nonMaxCones}.

This fan structure is combinatorially extremely large and carrying out the corresponding subdivisions of cones in $\tgr$ is computationally infeasible. However, the following observation helps: Often, when a cancellation in the lowest order terms of a binomial occurs, the leading monomial of the corresponding generator in $\mathcal F$ is independent of the exact value of the binomial because there is another term of lower value. Systematically exploiting this observation leads to a unique coarsening of the described fan structure. We were able to compute this coarsened fan structure and use it to classify all moneric subspaces $G$ whose corresponding tropical point $\nu(G) := (\nu(p_{123}), \ldots, \nu(p_{456}))$ in $\tgr$ does not lie in $L_{BC}$:

\begin{thm} \label{thm:SubdivTGr}
  There is a unique coarsest refinement $\Sigma$ of the fan structure on $\tgr$ equipped with linear maps $\theta_C \colon C \to \bR^{27}$ for all maximal cones $C$ in $\Sigma$ such that every subspace $G \in \grO$ tropicalizing to the relative interior of $C$ is moneric of weight $\theta_C(\nu(G))$. Its $f$-vector is
  \begingroup \small
  \[(0,\,1,\,987,\,25605,\,245280,\,1195815,\,3380380,\,5827950,\,6076590,\,3524580,\,870840)\]
  \endgroup
  and the maximal cones describe $32880$ distinct moneric classes, which fall into $78$ orbits under the $\mathfrak S_6$-symmetry.
\end{thm}

\begin{proof}
  We denote by $\rpl := \bK[p_{123},\ldots,p_{456}]/\pl$ the homogeneous coordinate ring of $\gr$ under its Plücker embedding. 
  For any expression $c \in \rpl$, we consider its \emph{tropical graph}  $T(c) \subseteq \bR^{21}$, defined as the tropicalization of 
  the embedding of $\gr$ into $K^{21}$ via its Plücker coordinates together with expression $c$.
  Moreover, by $\trop(c)$ we denote the piece-wise linear function $\tgr \to \bR$, mapping $(w_{123},\ldots, w_{456})$ to $\min\{\sum_{ijk} \alpha_{ijk}^\supind{\ell} w_{ijk} \mid \ell = 1, \ldots, r\}$, where $\alpha^\supind{1}, \ldots, \alpha^\supind{r}$ are the exponents in $c$.
  
  For convenience, we number the expressions for the generators of the Cox-Nagata ring as $\mathcal F = \{f_1, \ldots, f_{27}\} \subseteq \rpl[x_1,y_1,\ldots, x_6, y_6]$. For each $i = 1,\ldots,27$, let $c_{i1},\ldots,c_{im_i} \in \rpl$ denote the coefficients of $f_i$ with respect to the variables $x_1,y_1,\ldots,x_6,y_6$. Define $\Lambda_i$ as the lower convex hull of $T(c_{i1}) \cup \ldots \cup T(c_{im_i})$ in $\bR^{21}$ and let $\Sigma_i$ be the subdivision of $\tgr$ induced from $\Lambda_i$ under the projection $\pi \colon \bR^{21} \twoheadrightarrow \bR^{20}$.
  Consider the fan structure $\Sigma$ on $\tgr$ arising as the common refinement of the subdivisions $\Sigma_1,\ldots,\Sigma_{27}$.
  
  Let $C$ be any maximal cone of $\Sigma$. By construction, for $i=1,\ldots,27$, the lower convex hull $\Lambda_i$ is linear over the relative interior of $C$, i.e.,
  \[\Relint(\Lambda_i \cap \pi^{-1}(C)) = \{(w,\theta_i(w)) \in \bR^{21} \mid w \in \Relint(C)\}\]
  for a uniquely determined linear map $\theta_i \colon C \to \bR$. Moreover, the explicit computation of $\Lambda_i$ (as described below) reveals that the relative interior of any maximal cone of $\Lambda_i$ intersects only one of the tropical graphs $T(c_{i1}), \ldots, T(c_{im_i})$. In particular, $\Relint(\Lambda_i \cap \pi^{-1}(C))$ is contained in $T(c_{ij})$ for exactly one $j$ and does not intersect $T(c_{ik})$ for $k \neq j$. This shows that for all $G \in 
  \grO$ tropicalizing to $w \in \Relint(C)$, the $j$-th coefficient $c_{ij}$ is the unique term of smallest value $\theta_i(w)$ among all terms of $f_i$. Defining $\theta_C := (\theta_1,\ldots,\theta_{27}) \colon C \to \bR^{27}$, we conclude that every $G$ with $\nu(G) \in \Relint(C)$ gives rise to a set $R_A$ that is moneric of weight $\theta_C(w)$.
  
  The direct calculation of $\Sigma$ is a computationally difficult task, as evidenced by its large $f$-vector. However, by exploiting symmetries at several stages and making use of parallel computations, we were able to carry out the computation with the computer algebra system \textsc{Singular} \cite{Singular} using its interface to \textsc{Gfan} \cite{gfan} and its library for computing tropical varieties \cite{tropicallib}. 
  
  We proceed as follows: We determine $\Sigma$ by computing for every single maximal cone in $\tgr$ its subdivision in $\Sigma$. Up to $\mathfrak S_6$-symmetry, $\tgr$ consists only of seven maximal cones \cite{SpeStu}. These seven cones themselves are stabilized under a subgroup of $\mathfrak S_6$ of order $48$, $24$, $8$, $8$, $4$, $3$ and $2$, respectively. In the following steps of the computation, we fix one of these seven maximal cones $C$ and its symmetry group $\mathfrak S(C) \subseteq \mathfrak S_6$.
  
  Most of the coefficients $c_{ij} \in \rpl$ in $f_1,\ldots,f_{27}$ are monomial expressions in the Plücker coordinates. For these $c_{ij}$, the tropical graph $T(c_{ij})$ is simply the image of $\tgr$ under the linear embedding $\bR^{20} \hookrightarrow \bR^{21}$, $w \mapsto (w,\, \trop(c_{ij})(w))$. 
  
  The remaining coefficients can be expressed as $c_{ij} = u-v$ with $u,v$ monomials in $\bK[p_{123},\ldots,p_{456}]$. If the linear functions $\trop(u),\, \trop(v) \colon \bR^{20} \to \bR$ do not coincide on the entire cone $C$ for at least one of the equivalent binomial expressions for $c_{ij}$ (see Lemma~\ref{lem:equivbin}), then the lower convex hull of $T(c_{ij}) \cap \pi^{-1}(C)$ is the graph of the piece-wise linear function $\restr{\trop(u-v)}{C} \colon C \to \bR$. This is in fact always the case except for one special situation: For one of the seven cones $C$, there exists one coefficient $c_{ij}$ for which all known expressions as binomials $u-v$ satisfy $\restr{\trop(u)}{C} = \restr{\trop(v)}{C}$. For this one choice of $C$, $c_{ij}$, we computed the tropical variety $T(c_{ij})$ explicitly in \textsc{Singular} and read off that the lower convex hull of $T(c_{ij}) \cap \pi^{-1}(C)$ is in fact the graph of a linear function $C \to \bR$.
  
  From these descriptions of the lower convex hulls of $T(c_{ij}) \cap \pi^{-1}(C)$ for all $c_{ij}$, we compute $\Lambda_i \cap \pi^{-1}(C)$ for each $i=1,\ldots,27$ and the induced subdivisions $\restr{\Sigma_i}{C}$ of $C$. Our next step is to compute the common refinement of $\restr{\Sigma_1}{C}, \ldots, \restr{\Sigma_{27}}{C}$, which is combinatorially large, so we find it imperative to exploit symmetries and use parallelization in the computation, as described in the following algorithm. Here, each subdivision $\restr{\Sigma_i}{C}$ is understood as a set of maximal cones.
  
  \begingroup \small
  \begin{algorithmic}[1]
    \REQUIRE{The subdivisions $\restr{\Sigma_1}{C}$, \ldots, $\restr{\Sigma_{27}}{C}$ and the symmetry group $\mathfrak S(C)$}
    \ENSURE{$\restr{\Sigma}{C}$, the common refinement of $\restr{\Sigma_1}{C},\ldots,\restr{\Sigma_{27}}{C}$}
    \STATE $\Omega := \{C\}$.
    \FOR{$i\in\{1,\ldots,27\}$}
    \FORALL[\AlgComment{3em}{To be carried out in parallel}]{$\sigma \in \restr{\Sigma_i}{C}$}
    \STATE Compute $\Omega_\sigma := \{\sigma \cap \sigma' \mid \sigma' \in \Omega \text{ such that } \dim(\sigma \cap \sigma') = \dim(\sigma)\}$.
    \ENDFOR
    \STATE $\Omega := \{\tau_1,\ldots,\tau_k\}$, a set of 
    representatives for the $\mathfrak S(C)$-orbits of $\bigcup_{\sigma \in \restr{\Sigma_i}{C}} \Omega_\sigma$.
    \ENDFOR
    \RETURN{$\bigcup_{g \in \mathfrak S(C)} \{g \cdot \sigma \mid \sigma \in \Omega\}$.}
  \end{algorithmic}
  \endgroup
  
  With this common refinement algorithm, we were able to compute the maximal cones in the subdivision $\restr{\Sigma_i}{C}$ for each of the seven maximal cones $C$ representing the $\mathfrak S_6$-symmetry classes in $\tgr$. After letting the symmetric group $\mathfrak S_6$ act on these results and computing the fan structure induced by the set of maximal cones, we obtain $\Sigma$ and read off the $f$-vector claimed above.
  
  During the computation, we remember at every stage for each maximal cone the choice of initial monomials it corresponds to. At the end, the computation reveals that the $870840$ maximal cones only describe $32880$ distinct moneric classes, and they fall into only $78$ orbits under the $\mathfrak S_6$-symmetry.  More detailed statistics on the subdivisions for each of the seven symmetry classes of cones are presented in Figure~\ref{fig:coneStatistics} and an explicit description of the subdivision for one of the maximal cones in $\tgr$ is contained among the discussion in Section~\ref{sec:EEEE}.
\end{proof}

\begin{figure}[h]
  \begingroup \footnotesize
  \setlength{\tabcolsep}{0.32em}
  \begin{tabular}{l|ccccccc}
    Type of cone $C$ & 
    FFFGG & EEEE & EEFF1 & EEFF2 & EFFG & EEEG & EEFG \\[0.1em] \hline \\[-0.8em]
    size of $\mathfrak S_6$-orbit $\mathfrak S_6 \cdot C$ &
    15 & 30 & 90 & 90 & 180 & 240 & 360 \\
    order of symmetry group $\mathfrak S(C)$ &
    48 & 24 & 8 & 8 & 4 & 3 & 2 \\
    $\#\{\text{cones in }\restr{\Sigma}{C}\}$ &
    1240 & 864 & 1248 & 860 & 806 & 830 & 812 \\
    $\#\{\text{cones in }\restr{\Sigma}{C}\}/\mathfrak S(C)$ &
    38 & 36 & 205 & 142 & 259 & 278 & 460 \\
    $\#\{\text{moneric classes from } C\}/\mathfrak S(C)$ &
    31 & 35 & 162 & 135 & 253 & 274 & 449 \\
    $\#\{\text{moneric classes from }\mathfrak S_6 \cdot C\}/\mathfrak S_6$ & 
    25 & 17 & 45 & 47 & 60 & 37 & 64 \\
    $\#\{\text{moneric classes from }\mathfrak S_6 \cdot C\}$ & 
    11040 & 7080 & 16080 & 17880 & 24600 & 17040 & 26400
  \end{tabular}
  \endgroup
  \caption{Statistics on moneric classes arising from $\Sigma$}
  \label{fig:coneStatistics}
\end{figure}

\section{A complete classification of monericity in EEEE-cones} \label{sec:EEEE}

In the previous section, we computed all moneric classes arising from maximal cones of $\tb$ mapping injectively to $\tgr$ under $\tpr$. On the other hand, maximal cones of $\tb$ not mapping full-dimensionally to $\tgr$ may also give rise to moneric subspaces. To classify also such moneric classes, a more refined approach is necessary. Here, we present partial results in this direction.

Recall that $\tgr$ consists of $1005$ maximal cones which fall into seven orbits under the $\mathfrak S_6$-action, see \cite{SpeStu}. In the following, we focus our attention to one of these seven symmetry classes, called EEEE in \cite{SpeStu}. Going beyond Theorem~\ref{thm:SubdivTGr}, we study all moneric subspaces arising from the relative interior of cones of this symmetry class, also classifying the behaviour along $L_{BC}$. We suspect that a similar approach would be useful for treating all cones in $\tgr$.

By \cite{SpeStu}, a cone $C$ representing the symmetry class EEEE is given by the image of the cone $\bR^6 \times \bR_{\geq 0}^4$ under the linear map
$\varphi \colon \bR^6 \times \bR^4 \hookrightarrow \bR^{20}$,
\[\varphi(a,b) := \sum_{i<j<k} (a_{i}+a_{j}+a_{k})e_{ijk} + b_1 e_{123} + b_2 e_{145} + b_3 e_{246} + b_4 e_{356} \in \bR^{20}.\]
Its symmetry group $\mathfrak S(C) \subseteq \mathfrak S_6$ is the subgroup of $\mathfrak S_6$ of order $24$ stabilizing
${(e_1 \wedge e_6) \cdot (e_2 \wedge e_5) \cdot (e_3 \wedge e_4)} \in \Sym^3 \bigwedge^2 \bR^6$.
In the following, let $G \in \grO$ be a moneric subspace such that its corresponding point in $\tgr$ is $\nu(G) = \varphi((a,b))$ with $(a,b) \in \bR^6 \times \bR_{>0}^4$.

First, we observe that the coefficients in $F_{16}$ are $p_{126}, p_{136}, p_{146}$ and $p_{156}$, so their values are $\{a_1+a_6+a_i \mid i = 2, 3, 4, 5\}$. Since $\init(F_{16})$ is a monomial, this implies that there is a unique smallest number among $\{a_2,a_3,a_4,a_5\}$. In the same way, considering the coefficients of $F_{25}$ and $F_{34}$ reveals that both $\{a_1,a_3,a_4,a_6\}$ and $\{a_1,a_2,a_5,a_6\}$ have a unique smallest element. Up to $\mathfrak S(C)$-symmetry, 
we may assume that $a_1 < a_i$ for $i = 2,\ldots,5$ and that $a_2 < a_j$ for $j = 3,4,5$. The coefficients of $F_{14}$ have values $a_1 + a_4 + \{a_2, a_3, a_5+b_2, a_6\}$, so $a_2 \neq a_6$ holds, as otherwise $\init(F_{14})$ would not be a monomial. This leads to two cases to consider: $a_6 < a_2$ and $a_2 < a_6$.
\medskip

\noindent \emph{Case 1}: $a_1 < a_6 < a_2 < a_i$ for all $i = 3, 4, 5$.

Considering the expression \eqref{eq:genF} for $F_{ij}$, we observe that the four coefficients of $F_{ij}$ have values at least $a_i+a_j+a_k$, where $k$ ranges over $\{1,\ldots,6\} \setminus \{i,j\}$, with equality if and only if $\{i,j,k\} \neq \{1,2,3\},\{1,4,5\},\{2,4,6\},\{3,5,6\}$. In particular, the above inequalities for $a_1, \ldots, a_6$ uniquely determine
\begin{gather*}
\init(F_{ij}) = \frac{x_2 x_3 x_4 x_5 x_6}{x_i x_j} y_1 \qquad \text{for } 1 < i < j,\, (i,j) \neq (2,3), (4,5), \\
\init(F_{1i}) = \frac{x_2 x_3 x_4 x_5}{x_i} y_6 \qquad \text{for }i=2,3,4,5 \qquad \text{and} \qquad \init(F_{16}) = x_3 x_4 x_5 y_2.
\end{gather*}
For $F_{23}$ and $F_{45}$, the coefficients have values $a_2+a_3+\{a_1+b_1,a_4,a_5,a_6\}$ and $a_4+a_5+\{a_1+b_2,a_2,a_3,a_6\}$, respectively, leading to the following four possibilities for the initial monomials $\init(F_{23})$, $\init(F_{45})$:

\begingroup \small
\begin{center}
  \setlength{\tabcolsep}{1em}
  \begin{tabular}{c|cc}
    & $b_1 < a_6-a_1$ & $b_1 > a_6 - a_1$ \\ \hline
    $b_2 < a_2-a_1$ & $x_4 x_5 x_6 y_1$, $x_2 x_3 x_6 y_1$ & $x_1 x_4 x_5 y_6$, $x_2 x_3 x_6 y_1$ \\
    $b_2 > a_2-a_1$ & $x_4 x_5 x_6 y_1$, $x_1 x_2 x_3 y_6$ & $x_1 x_4 x_5 y_6$, $x_1 x_2 x_3 y_6$.
  \end{tabular}
\end{center}
\endgroup


In the expression \eqref{eq:genG} for $G_i$, the monomials are of the form  $\frac{x_1 \cdots x_6 x_i}{x_j x_k} y_j y_k$, where $(j,k)$ ranges over tuples with $j \neq k$ or $j = k = i$. Examining the coefficient of such a monomial, we observe that it has value at least $2\sum_{\ell \neq i}a_\ell + a_j + a_k$. From this, we see that $\init(G_i) = x_2 x_3 x_4 x_5 x_i y_1 y_6$ for $i=2,3,4,5$,
as the corresponding coefficients exactly attain this bound, while the inequalities among $a_1, \ldots, a_6$ force the remaining coefficients of $G_i$ to have higher value.

For $G_1$, we observe that the coefficient of $y_1^2 x_2 x_3 x_4 x_5 x_6$ is a binomial $u-v$ in the Plücker coordinates, where 
$\nu(v) = \nu(u)+b_3$.
In particular, $\nu(u-v) = \nu(u) = 2\sum_i a_i$, which is smaller than the values of all other coefficients of $G_1$. Hence, $\init(G_1) = x_2 x_3 x_4 x_5 x_6 y_1^2$. It remains to consider 
\[G_6 = (u_1-u_2) x_2 x_3 x_4 x_5 x_6 y_1 y_6 + u_3 x_3 x_4 x_5 x_6^2 y_1 y_2 + u_4 x_1 x_2 x_3 x_4 x_5 y_6^2 + \ldots,\]
where $\{u_1, u_2, u_3, u_4\}$ are monomials in the Plücker coordinates of values
\[2a_1+\ldots+2a_5 + \{a_1 + a_6 + b_1,\, a_1 + a_6 + b_2,\, a_1 + a_2 + b_1,\, 2a_6\}\]
and the remaining terms of $G_6$ are each of higher value than at least one of them. Refining the conditions imposed by the four possibilities for $\init(F_{23}),\init(F_{45})$ leads to the following possible leading monomials for $G_6$:

\begingroup\footnotesize
\begin{center}
  \begin{tabular}{ccc}
    conditions on $a \in \bR^6$, $b\in \bR_{>0}^4$ & values of $u_1, \ldots, u_4$ & $\init(G_6)$ \\[0.4em] 
    $b_1 > a_6-a_1$, $b_2 > a_2-a_1$ & $\nu(u_4) < \nu(u_1),\nu(u_2),\nu(u_3)$ & $x_1 x_2 x_3 x_4 x_5 y_6^2$ \\
    $b_1 > a_6-a_1$, $b_2 < a_2-a_1$ & $\nu(u_2) < \nu(u_4) < \nu(u_1) < \nu(u_3)$ & $x_2 x_3 x_4 x_5 x_6 y_1 y_6$ \\
    $b_1 < a_6-a_1$, $b_2 > a_2-a_1$ & $\nu(u_1) < \nu(u_2), \nu(u_3), \nu(u_4)$ & $x_2 x_3 x_4 x_5 x_6 y_1 y_6$ \\
    $b_1 < a_6-a_1$, $b_2 < a_2-a_1$, $b_1 < b_2$ & $\nu(u_1) < \nu(u_2), \nu(u_3),\nu(u_4)$ & $x_2 x_3 x_4 x_5 x_6 y_1 y_6$ \\
    $b_2 < b_1 < a_6-a_1$ 
    & $\nu(u_2) < \nu(u_1) < \nu(u_3),\nu(u_4)$ & $x_2 x_3 x_4 x_5 x_6 y_1 y_6$ \\
    $b_1 = b_2 < a_6-a_1$ 
    & $\nu(u_1) = \nu(u_2) < \nu(u_3),\nu(u_4)$ & ? \\
  \end{tabular}
\end{center}
\endgroup

In the last case $b_1 = b_2 < a_6 - a_1$, the binomial $u_1-u_2$ can attain any value $\lambda \geq b_1 + 2\sum_{i=1}^6 a_i - (a_6-a_1)$. Indeed, the tropical graph $T(u_1-u_2) \subseteq \bR^{21}$ 
contains all points of the form $(\varphi(a',b'),\lambda')$ with $a' \in \bR^6$, $b' \in \bR_{>0}^4$, $b'_1 = b'_2$ and $\lambda \geq b'_1 + 2\sum_{i=1}^6 a_i - (a_6-a_1)$.
In particular, the following three cases occur under the specified condition on $\lambda_0 := \nu(u_1-u_2)-2\sum_{i=1}^6 a_i + (a_6-a_1) - b_1$:

\begingroup\footnotesize
\begin{center}
  \begin{tabular}{cc}
    conditions on $(a,b,\lambda_0) \in \bR^6 \times \bR_{>0}^4 \times \bR_{>0}$ & $\init(G_6)$ \\[0.4em] 
    $b_1 = b_2 < a_6-a_1$, $\lambda_0 < a_2-a_6$, $\lambda_0 < (a_6-a_1)-b_1$ & $x_2 x_3 x_4 x_5 x_6 y_1 y_6$ \\
    $b_1 = b_2 < (a_6-a_1)-(a_2-a_6)$, $\lambda_0 > a_2 - a_6$ & $x_3 x_4 x_5 x_6^2 y_1 y_2$ \\
    $(a_6-a_1)-(a_2-a_6) < b_1 = b_2 < a_6-a_1$, $\lambda_0 > (a_6-a_1)-b_1$ & $x_1 x_2 x_3 x_4 x_5 y_6^2$ \\
  \end{tabular}
\end{center}
\endgroup
\medskip

\noindent \emph{Case 2}: $a_1 < a_2 < a_i$ for all $i = 3, 4, 5, 6$.

We proceed as in the previous case, examining the restrictions on the leading terms of $\mathcal F$ imposed by the inequalities $a_1 < a_2 < a_3,a_4,a_5,a_6$. We always have
\begingroup \small
\begin{align*}
   \init(F_{1j}) &= \frac{x_3 x_4 x_5 x_6}{x_j} y_2,  \quad \init(F_{ij}) = \frac{x_2 x_3 x_4 x_5 x_6}{x_i x_j} y_1 \quad \text{for }j \geq 4, i \neq 1,6, (i,j) \neq (4,5), \\
   \init(G_1) &= x_2 x_3 x_4 x_5 x_6 y_1^2, \quad \init(G_2) = x_2 x_3 x_4 x_5 x_6 y_1 y_2, \quad \init(G_3) = x_3^2 x_4 x_5 x_6 y_1 y_2,
\end{align*}
\endgroup
while for the remaining generators, we obtain
%
\begingroup \footnotesize
\begin{align*}
&\init(F_{45}) \in \{x_2 x_3 x_6 y_1,\, x_1 x_3 x_6 y_2\} =: X_1, \\
&\init(F_{12}) \in \{x_4 x_5 x_6 y_3,\, x_3 x_5 x_6 y_4,\, x_3 x_4 x_6 y_5,\, x_3 x_4 x_5 y_6\} =: X_2, \\
&\init(F_{13}) \in \{x_4 x_5 x_6 y_2,\, x_2 x_5 x_6 y_4,\, x_2 x_4 x_6 y_5,\, x_2 x_4 x_5 y_6\} =: X_3, \\
&\init(F_{23}) \in \{x_4 x_5 x_6 y_1,\, x_1 x_5 x_6 y_4,\, x_1 x_4 x_6 y_5,\, x_1 x_4 x_5 y_6\} =: X_4, \\
&\init(G_5) \in \{x_3 x_4 x_5^2 x_6 y_1 y_2,\, x_2 x_3 x_5^2 x_6 y_1 y_4,\, x_2 x_3 x_4 x_5 x_6 y_1 y_5,\, x_2 x_3 x_4 x_5^2 y_1 y_6\} := X_5, \\
&\init(G_4) \in \{x_3 x_4^2 x_5 x_6 y_1 y_2,\, x_2 x_4^2 x_5 x_6 y_1 y_3,\, x_2 x_3 x_4 x_5 x_6 y_1 y_4,\, x_2 x_3 x_4^2 x_6 y_1 y_5,\, x_2 x_3 x_4^2 x_5 y_1 y_6\} =: X_6, \\
&\init(G_6) \in \{x_3 x_4 x_5 x_6^2 y_1 y_2,\, x_2 x_3 x_5 x_6^2 y_1 y_4,\, x_1 x_3 x_5 x_6^2 y_2 y_4,\, x_2 x_3 x_4 x_6^2 y_1 y_5,\,x_1 x_3 x_4 x_6^2 y_2 y_5, \\
&\phantom{\init(G_6) \in \{~} x_2 x_3 x_4 x_5 x_6 y_1 y_6,\, x_1 x_3 x_4 x_5 x_6 y_2 y_6\} =: X_7.
\end{align*}
\endgroup

Opposed to Case 1, one also observes that here no cancellation of lowest order terms in binomials can affect the initial term of any generator. In particular, the initials of the generators $\mathcal F$ only depend on $a$ and $b$, leading to a distinction of $31$ cases of moneric choices, visualized in Figure~\ref{fig:caseDistinction}. There, the choice of initial monomials from the sets $X_1, \ldots, X_7$ is specified by a $7$-tuple $(i_1, \ldots, i_7)$ indicating that the $i_j$-th element of $X_j$ forms the leading monomial.


\begin{figure}[h]
  \begingroup \scriptsize
  \setlength{\tabcolsep}{0.5em}
  \renewcommand{\arraystretch}{1.3}
  \begin{tabular}{r|c|c|c|c|}
    \multicolumn{1}{c}{} & \multicolumn{2}{c}{$b_2 < a_2 - a_1$} & \multicolumn{2}{c}{$b_2 > a_2 - a_1$} \\ \cline{2-5}
    $a_4 < a_5, a_6$, & \multicolumn{2}{|c|}{\multirow{2}{*}{\texttt{2222232}}} & \multicolumn{2}{|c|}{\multirow{2}{*}{\texttt{1222233}}} \\[-0.3em]
    $a_4 < b_1+a_1$ & \multicolumn{2}{|c|}{} & \multicolumn{2}{|c|}{} \\ \cline{2-5}
    $a_5 < a_4, a_6$, & \multicolumn{2}{|c|}{\multirow{2}{*}{\texttt{2333344}}} & \multicolumn{2}{|c|}{\multirow{2}{*}{\texttt{1333345}}} \\[-0.3em]
    $a_5 < b_1+a_1$ & \multicolumn{2}{|c|}{} & \multicolumn{2}{|c|}{} \\ \cline{2-5}
    $a_6 < a_4, a_5$, & \multicolumn{2}{|c|}{\multirow{2}{*}{\texttt{2444456}}} & \multicolumn{2}{|c|}{\multirow{2}{*}{\texttt{1444457}}} \\[-0.3em]
    $a_6 < b_1+a_1$ & \multicolumn{2}{|c|}{} & \multicolumn{2}{|c|}{} \\ \cline{2-5}
    $a_4 < a_5, a_6$, & $b_2 < (a_2+b_1)-a_4$ & $b_2 > (a_2+b_1)-a_4$ & \multicolumn{2}{|c|}{\multirow{2}{*}{\texttt{2221231}}} \\[-0.3em]
    $a_4 \in b_1+(a_1, a_2)$ & \texttt{1221232} & \texttt{1221231} & \multicolumn{2}{|c|}{} \\ \cline{2-5}
    $a_5 < a_4, a_6$, & $b_2 < (a_2+b_1)-a_5$ & $b_2 > (a_2+b_1)-a_5$ & \multicolumn{2}{|c|}{\multirow{2}{*}{\texttt{2331341}}} \\[-0.3em]
    $a_5 \in b_1+(a_1, a_2)$ & \texttt{1331344} & \texttt{1331341} & \multicolumn{2}{|c|}{} \\ \cline{2-5}
    $a_6 < a_4, a_5$, & $b_2 < (a_2+b_1)-a_6$ & $b_2 > (a_2+b_1)-a_6$ & \multicolumn{2}{|c|}{\multirow{2}{*}{\texttt{2441451}}} \\[-0.3em]
    $a_6 \in b_1+(a_1, a_2)$ & \texttt{1441456} & \texttt{1441451} & \multicolumn{2}{|c|}{} \\ \cline{2-5}
    $a_4 < a_5, a_6$, & $b_4 < a_4-(a_2+b_1)$ & $b_4 > a_4-(a_2+b_1)$ & $b_4 < a_4-(a_2+b_1)$ & $b_4 > a_4-(a_2+b_1)$ \\[-0.3em]
    $a_4 \in b_1+(a_2, a_3)$ & \texttt{1211111} & \texttt{1211131} & \texttt{2211111} & \texttt{2211131} \\ \cline{2-5}
    $a_5 < a_4, a_6$, & $b_4 < a_5-(a_2+b_1)$ & $b_4 > a_5-(a_2+b_1)$ & $b_4 < a_5-(a_2+b_1)$ & $b_4 > a_5-(a_2+b_1)$ \\[-0.3em]
    $a_5 \in b_1+(a_2, a_3)$ & \texttt{1311111} & \texttt{1311141} & \texttt{2311111} & \texttt{2311141} \\ \cline{2-5}
    $a_6 < a_4, a_5$, & $b_4 < a_6-(a_2+b_1)$ & $b_4 > a_6-(a_2+b_1)$ & $b_4 < a_6-(a_2+b_1)$ & $b_4 > a_6-(a_2+b_1)$ \\[-0.3em]
    $a_6 \in b_1+(a_2, a_3)$ & \texttt{1411111} & \texttt{1411151} & \texttt{2411111} & \texttt{2411151} \\ \cline{2-5}
    \multirow{2}{*}{$a_4,a_5,a_6 > b_1+a_3$} & $b_4 < a_3-a_2$ & $b_4 > a_3-a_2$ & $b_4 < a_3-a_2$ & $b_4 > a_3-a_2$ \\[-0.3em]
    & \texttt{1111111} & \texttt{1111121} & \texttt{2111111} & \texttt{2111121} \\ \cline{2-5}
  \end{tabular}
  \endgroup
  \caption{Classification of monerics for $a_1 < a_2 < a_3, a_4, a_5, a_6$.}
  \label{fig:caseDistinction}
\end{figure}

In total, Cases 1 and 2 combined, we obtain $39$ sets of inequalities classifying all moneric subspaces inside $C$ up to $\mathfrak S(C)$-symmetry. By considering their $\mathfrak S_6$-orbits, we obtain:

\begin{thm}\label{thm:mainthm5}
  There are exactly $7320$ equivalence classes of moneric subspaces $G \in \grO$ tropicalizing to the relative interior of a maximal cone of type EEEE in $\tgr$. They fall into $19$ orbits under the action of $\mathfrak S_6$.
\end{thm}

\section{Khovanskii or not?}\label{secKhovanskii}


The last step of Algorithm~\ref{algorithm} is to check whether a moneric subspace $G$ is Khovanskii, i.e. whether the set of initial monomials $\{in(f) \colon f \in \mathcal{F}\}$ generates the initial algebra $in(R_A)$ of the ring $R_A$ generated by $\mathcal{F}$. We apply the method presented in~\cite[Thm~5.1]{StXu}, relying on the lifting criterion~\cite[Prop.~1.3]{CoHeVa}. For a set of binomial generators $b_1,\ldots,b_k$ of the ideal $J$ of relations between $\{in(f) \colon f \in \mathcal{F}\}$ we have to check whether each $b_i$ lifts to the ideal $I$ of relations between $\mathcal{F}$. We say that $w(z_1,\ldots,z_m) \in J$ lifts to $I$ if we may write $0 = w(f_1,\ldots,f_m) + S(f_1,\ldots,f_m)$, where $S$ consists of monomials in $f_1,\ldots,f_m$ of value higher or equal than the value of $w(f_1,\ldots,f_m)$.

This criterion can be verified by comparing the dimensions of graded pieces of~$J$ containing a minimal binomial generating set $b_1,\ldots,b_k$ with corresponding graded pieces of~$I$. We define a map $q$ sending a homogeneous element $\overline{w} \in I$ to an element $w \in J$ obtained by taking only monomials with smallest value from~$\overline{w}$ -- the left inverse to the lifting of relations. Thus a binomial $b_i$ can be lifted to $I$ if and only if it lies in the image of $q$. Note that if a graded piece of $I$ has dimension $d$ (as a $K$-vector space) then it is mapped by $q$ to a $d$-dimensional subspace of the corresponding graded piece of $J$.

We consider 78 equivalence classes of moneric subspaces $G$ from Theorem~\ref{thm:SubdivTGr}. In each of these cases minimal generators of $J$ are placed in 27 degrees (the same as in~\cite[Thm~5.1]{StXu}) and one may check that in these degrees there cannot be any more linearly independent generators. For each of these degrees the dimension of the graded piece of $I$ is~3 (in general, one may obtain this dimension from the value of the Hilbert function, computed e.g. from formula~\cite[Cor.~5.2]{StXu}). Hence an equivalence class as above is Khovanskii if and only if all corresponding graded pieces of $J$ contain 3 minimal generators.

\begin{thm}\label{thmKhovanskii}
There are 38 Khovanskii classes in the set of 78 moneric equivalence classes found in Theorem~\ref{thm:SubdivTGr}. In particular, we obtain 38 different combinatorial types of toric degenerations of total coordinate spaces of Cox rings of smooth cubic del Pezzo surfaces.
\end{thm}

\begin{cor}
The smallest number of faces of the cone corresponding to the toric algebra for one of the 38 Khovanskii classes found above is~21. That is, those classes do not give a more compact formula for the Hilbert function of $R_A$ than~\cite[Cor.~5.2]{StXu}.
\end{cor}

\section{Summary}

In Sections~\ref{secResults}-\ref{secKhovanskii} we have shown how one can deal with each step of Algorithm~\ref{algorithm}, and presented examples of application of these ideas. Although the computations are not yet finished, we expect that our methods will be sufficient to understand the missing cases and complete the classification of Khovanskii bases of Cox rings of the smooth cubic del Pezzo surfaces. What we still need to analyse are the lower-dimensional cones of $\tgr$, which can be treated with methods similar to the ones used in Section~\ref{secResults}, and $L_{BC}$ outside the EEEE cone, for which we will use the approach presented in Section~\ref{sec:EEEE}. The results of further computations will be uploaded successively to the project's website at \url{https://software.mis.mpg.de}, together with the source code of our programs.

\section*{Acknowledgements}
The authors would like to thank Bernd Sturmfels for initiating and encouraging the collaboration for this project. They also thank the anonymous referee for valuable comments.
\vspace{-2em}

\bibliography{cubic19}

\begin{thebibliography}{10}

\bibitem{BaPo}
Victor~V. Batyrev and Oleg~N. Popov.
\newblock The {C}ox ring of a del {P}ezzo surface.
\newblock In {\em Arithmetic of higher-dimensional algebraic varieties ({P}alo
  {A}lto, {CA}, 2002)}, volume 226 of {\em Progr. Math.}, pages 85--103.
  Birkh\"{a}user Boston, Boston, MA, 2004.

\bibitem{BeCoDBFM}
Martha Bernal~Guill\'{e}n, Daniel Corey, Maria Donten-Bury, Naoki Fujita, and
  Georg Merz.
\newblock Khovanskii bases of {C}ox-{N}agata rings and tropical geometry.
\newblock In {\em Combinatorial algebraic geometry}, volume~80 of {\em Fields
  Inst. Commun.}, pages 159--179. Fields Inst. Res. Math. Sci., Toronto, ON,
  2017.

\bibitem{CaTe}
Ana-Maria Castravet and Jenia Tevelev.
\newblock Hilbert's 14th problem and {C}ox rings.
\newblock {\em Compos. Math.}, 142(6):1479--1498, 2006.

\bibitem{CoHeVa}
Aldo Conca, J\"urgen Herzog, and Giuseppe Valla.
\newblock Sagbi bases with applications to blow-up algebras.
\newblock {\em J. Reine Angew. Math.}, 474:113--138, 1996.

\bibitem{Singular}
Wolfram Decker, Gert-Martin Greuel, Gerhard Pfister, and Hans Sch\"onemann.
\newblock {\sc Singular} {4-1-2} --- {A} computer algebra system for polynomial
  computations.
\newblock \url{http://www.singular.uni-kl.de}, 2019.

\bibitem{M2}
Daniel~R. Grayson and Michael~E. Stillman.
\newblock Macaulay2, a software system for research in algebraic geometry.
\newblock Available at \url{http://www.math.uiuc.edu/Macaulay2/}.

\bibitem{gfan}
Anders~N. Jensen.
\newblock {G}fan, a software system for {G}r{\"o}bner fans and tropical
  varieties.
\newblock Available at
  \url{http://home.imf.au.dk/jensen/software/gfan/gfan.html}.

\bibitem{tropicallib}
Anders~N. Jensen, Hannah Markwig, Thomas Markwig, and Yue Ren.
\newblock {\tt tropical.lib}. {A} {\sc {s}ingular} library for computations in
  tropical geometry, 2019.

\bibitem{KaMaKhova}
Kiumars Kaveh and Christopher Manon.
\newblock Khovanskii bases, higher rank valuations, and tropical geometry.
\newblock {\em SIAM J. Appl. Algebra Geom.}, 3(2):292--336, 2019.

\bibitem{MacLStur}
Diane Maclagan and Bernd Sturmfels.
\newblock {\em Introduction to tropical geometry}, volume 161 of {\em Graduate
  Studies in Mathematics}.
\newblock American Mathematical Society, Providence, RI, 2015.

\bibitem{Mukai}
Shigeru Mukai.
\newblock Geometric realization of {$T$}-shaped root systems and
  counterexamples to {H}ilbert's fourteenth problem.
\newblock In {\em Algebraic transformation groups and algebraic varieties},
  volume 132 of {\em Encyclopaedia Math. Sci.}, pages 123--129. Springer,
  Berlin, 2004.

\bibitem{Nagata}
Masayoshi Nagata.
\newblock On the {$14$}-th problem of {H}ilbert.
\newblock {\em Amer. J. Math.}, 81:766--772, 1959.

\bibitem{RobbSwee}
Lorenzo Robbiano and Moss Sweedler.
\newblock Subalgebra bases.
\newblock In {\em Commutative algebra ({S}alvador, 1988)}, volume 1430 of {\em
  Lecture Notes in Math.}, pages 61--87. Springer, Berlin, 1990.

\bibitem{SpeStu}
David Speyer and Bernd Sturmfels.
\newblock The tropical {G}rassmannian.
\newblock {\em Adv. Geom.}, 4(3):389--411, 2004.

\bibitem{StXu}
Bernd Sturmfels and Zhiqiang Xu.
\newblock Sagbi bases of {C}ox-{N}agata rings.
\newblock {\em J. Eur. Math. Soc. (JEMS)}, 12(2):429--459, 2010.

\end{thebibliography}

\end{document}